\documentclass{amsart}
\usepackage{amssymb,amsmath, amsfonts, amsrefs, tikz, epsfig, float}
\usepackage{amsthm}
\usepackage{mathrsfs}
\usepackage{enumitem, indentfirst}
\usepackage[fleqn,tbtags]{mathtools}
\usepackage{color}
\frenchspacing

 \newtheorem{theorem}{Theorem}[section]
 \newtheorem{corollary}[theorem]{Corollary}
 \newtheorem{lemma}[theorem]{Lemma}
 \newtheorem{proposition}[theorem]{Proposition}

 \theoremstyle{definition}

 \theoremstyle{remark}
 
  \numberwithin{equation}{section}

\renewcommand{\epsilon}{\varepsilon}
\renewcommand{\phi}{\varphi}
\renewcommand{\theta}{\vartheta}
\DeclareMathOperator{\sform}{\mathfrak{s}}
\DeclareMathOperator{\tform}{\mathfrak{t}}

\DeclareMathOperator{\wform}{\mathfrak{w}}

\DeclarePairedDelimiterX\sipt[2]{(}{)_{\tform}}{#1\,\delimsize\vert\,#2}
\DeclarePairedDelimiterX\sipv[2]{(}{)_{v}}{#1\,\delimsize\vert\,#2}
\DeclarePairedDelimiterX\sipw[2]{(}{)_{w}}{#1\,\delimsize\vert\,#2}
%\DeclarePairedDelimiterX\sipc[2]{(}{)_{C}}{#1\,\delimsize\vert\,#2}

\newcommand{\alg}{\mathscr{A}}

\newcommand{\dupN}{\mathbb{N}}

\newcommand{\seq}[1]{(#1_{n})_{n\in\dupN}}

\newcommand{\nen}{n\in\mathbb{N}}
\newcommand{\dupR}{\mathbb{R}}
\newcommand{\dupC}{\mathbb{C}}

\newcommand{\ran}{\operatorname{ran}}

\newcommand{\lef}{\mathscr{L}(E,F)}
\newcommand{\lefpoz}{\mathscr{L}_+(E,F)}
\newcommand{\lhaf}{\mathscr{L}(\hila,F)}

\newcommand{\leha}{\mathscr{L}(E,\hila)}

\newcommand{\hil}{\mathcal H}

\newcommand{\hila}{\hil_A}
\newcommand{\hilc}{\hil_C}

\DeclarePairedDelimiterX\sip[2]{(}{)}{#1\,\delimsize\vert\,#2}
\DeclarePairedDelimiterX\siptilde[2]{(}{)_{\!_{\widetilde{A}}}}{#1\,\delimsize\vert\,#2}
\DeclarePairedDelimiterX\sipf[2]{(}{)_{f}}{#1\,\delimsize\vert\,#2}
\DeclarePairedDelimiterX\sipg[2]{(}{)_{g}}{#1\,\delimsize\vert\,#2}
\DeclarePairedDelimiterX\siptw[2]{(}{)_{\tform+\wform}}{#1\,\delimsize\vert\,#2}
\DeclarePairedDelimiterX\set[2]{\{}{\}}{#1\,:\,#2}
\DeclarePairedDelimiterX\dual[2]{\langle}{\rangle}{#1,#2}
\DeclarePairedDelimiterX\sipa[2]{(}{)_{\!_A}}{#1\,\delimsize\vert\,#2}
\DeclarePairedDelimiterX\sipc[2]{(}{)_{\!_C}}{#1\,\delimsize\vert\,#2}
\DeclarePairedDelimiterX\sipab[2]{(}{)_{\!_{A+B}}}{#1\,\delimsize\vert\,#2}
\DeclarePairedDelimiterX\sipb[2]{(}{)_{\!_B}}{#1\,\delimsize\vert\,#2}
\newcommand{\anti}[1]{\bar{#1}'}

\newcommand{\limn}{\lim\limits_{n\rightarrow\infty}}

\allowdisplaybreaks
\title[Lebesgue decomposition via Arlinskii's iteration]{Operators on anti-dual pairs:\\ Lebesgue decomposition via Arlinskii's iteration}

\author[\'A. G\"ode]{\'Abel G\"ode}

\address{%
\'A. G\"ode \\ Department of Applied Analysis  and Computational Mathematics\\ E\"otv\"os Lor\'and University\\ P\'azm\'any P\'eter s\'et\'any 1/c.\\ Budapest H-1117\\ Hungary}
\email{godeabel@student.elte.hu}

\author[Zs. Tarcsay]{Zsigmond Tarcsay}

\address{%
Zs. Tarcsay \\ Department of Mathematics\\ Corvinus University of Budapest\\ IX. F\H ov\'am t\'er 13-15.\\ Budapest
H-1093 \\ Hungary\\ and Department of Applied Analysis  and Computational Mathematics\\ E\"otv\"os Lor\'and University\\ P\'azm\'any P\'eter s\'et\'any 1/c.\\ Budapest H-1117\\ Hungary}
\email{tarcsay.zsigmond@uni-corvinus.hu}

\thanks{ 
Project no. TKP 2021-NVA-09 has been implemented with the support provided by the Ministry of Innovation and Technology of Hungary from the National Research, Development and Innovation Fund, financed under the TKP2021-NVA funding scheme.}

%----------classification, keywords, date
\subjclass[2020]{Primary 47B65, 47A07}

\keywords{anti-dual pair; positive operator; parallel sum; Lebesgue decomposition; singularity}

\begin{document}

\begin{abstract}  

The aim of this paper is to prove a general Lebesgue decomposition theorem for positive operators on so-called anti-dual pairs, following the iterative approach introduced by Arlinskii. This procedure and the resulting theorem encompass several special cases, including positive operators on Hilbert spaces, non-negative forms on vector spaces, and representable functionals over *-algebras.

\end{abstract}

%\dedicatory{Dedicated to Professor Lajos Moln\'ar on the occasion of his 60th birthday}

\maketitle

\section{Introduction}

The theory of non-commutative Lebesgue decomposition dates back at least to the pioneering paper by T. Ando \cite{ando1976lebesgue}. According to his results, any positive operator on a Hilbert space can be decomposed into the sum of absolutely continuous and singular parts with respect to another operator. In Ando's method, the absolutely continuous part in the decomposition is constructive and can be determined using the parallel sum.

In contrast, Arlinskii \cite{arlinskii2017} used an iterative process to produce the singular part. However, in his approach, the key concept was also the parallel sum of operators.

The parallel sum first appeared in the work of Anderson and Duffin \cite{anderson1969series} in the context of matrices (see also \cite{eriksson1986potential}). Since then, analogous operations have been introduced in various other structures, such as bounded positive operators on Hilbert spaces \cite{pekarev1976parallel}, non-negative forms \cite{hassi2009lebesgue}, finitely additive measures \cite{titkos2013content}, states over *-algebras \cite{tarcsay2015parallel}, and positive operators over so-called anti-dual pairs \cite{TARCSAY2020Lebesgue}. One common essential property of the parallel sum operation across all these structures is that it can be used to characterize singularity. This feature was exploited by Arlinskii \cite{arlinskii2017} to prove Ando's Lebesgue decomposition theorem, finding the singular part as the fixed point of a single-variable map. Titkos \cite{titkos2019arlinskii} imitated this iterative procedure for non-negative sesquilinear forms and provided a simple and elementary proof of the existence of the Lebesgue decomposition.

As Ando noted, the Lebesgue decomposition in non-commutative structures is generally not unique. However, the decomposition Ando constructed is distinctive because the corresponding absolutely continuous part is maximal in a certain sense. The decomposition obtained via Arlinskii's iterative method coincides with this distinguished one. In Titkos's paper \cite{titkos2019arlinskii}, this extremality of the obtained Lebesgue decomposition is not demonstrated, but in many cases (when the decomposition is unique), his result is still effectively applicable.

The aim of the present paper is to prove a Lebesgue decomposition theorem over a more general structure (generalizing both bounded operators on Hilbert spaces and sesquilinear forms) based on the aforementioned iterative procedure, for positive operators on so-called \textit{anti-dual pairs}. Such a decomposition was already established by the second-named author in \cite{TARCSAY2020Lebesgue}; however, the proof presented here is considerably shorter. Moreover, we also establish the maximality of the decomposition in the sense of Ando, making our result a joint generalization of the results of Arlinskii and Titkos. In addition, we apply our main result to derive a corresponding Lebesgue decomposition theorem for positive functionals on a *-algebra.

%%%%%%%%%%%%%%%%%%%%%%%%%%%%%%%%%%%%%%%%%%%%%%%%%%%%%%
\section{Preliminaries}

Throughout the paper, let $E$ and $F$ denote complex vector spaces which are intertwined via a separating, sesquilinear function
\begin{equation*}
    \langle\cdot,\cdot\rangle:F\times E\to\mathbb{C},
\end{equation*}
called anti-duality function. We shall refer to the triple $(E,F,\langle\cdot,\cdot\rangle)$ (shortly denoted by $\langle F,E\rangle$) as an anti-dual pair. 

One of the most natural examples for anti-dual pairs are Hilbert spaces.  Indeed, if $\hil$ is a Hilbert space, then under the choice $E=F=\hil$ the pair $\sip \hil\hil$ forms an anti-dual pair, where of course the anti-duality function is the inner product  $\sip\cdot\cdot$ of $\hil$. 

%Indeed, if $\hil$ is a Hilbert space then with $E=F=\hil$ is a that casethe triple $(\mathcal{H},\mathcal{H},\sip\cdot\cdot)$, where $\mathcal{H}$ is a Hilbert space with inner product $(\cdot|\cdot)$.

A more general example for an anti-dual pair is the triple $(X,\anti{X},\langle\cdot,\cdot\rangle)$, where $X$ denotes a locally convex Hausdorff space, $\anti{X}$ is its conjugate topologic dual space (that is, the set of all continuous and anti-linear functionals $f$ from $X$ to $\dupC$) and the anti-duality function is the evaluation given by
\begin{equation*}
    \langle f,x\rangle\coloneqq f(x),\quad x\in X,f\in\anti{X}.
\end{equation*}

Let now $\dual FE$ be an anti-dual pair,  then the weak topology $w\coloneqq w(E,F)$ on $E$ induced by the family $\set{\dual f\cdot }{f\in F}$ is  a locally convex Hausdorff topology satisfying the  canonical duality condition
\begin{equation*}
    \anti{E}=F.
\end{equation*}
In other words, the topological anti-dual of $E$ can be identified with $F$ along the identification $f\equiv \dual f\cdot $.   
Similarly, the weak topology $w^*\coloneqq w(F,E)$ on $F$ induced by $E$ is also a locally convex Hausdorff topology such that $F'=E$. 
%Considering the corresponding weak topologies $w:=w(E,F)$ and $w^*:=w(F,E)$, one can see that $(E,w)$ and $(F,w^*)$ are locally convex Hausdorff spaces satisfying
%\begin{equation*}
 %   \anti{E}=F,\quad F'=E.
%\end{equation*}
These duality identifications imply in particular  that the canonical anti-dual pairs presented in the previous example are the most general examples of anti-dual pairs. 

Now we can introduce the adjoint of a weakly continuous operator. Let $\langle F_1,E_1\rangle$ and $\langle F_2,E_2\rangle$ be anti-dual pairs and equip each member of the spaces with the corresponding weak topology.  The adjoint of a weakly continuous linear operator $T:E_1\to F_2$ is a  linear operator $T^*:E_2\to F_1$ that satisfies
\begin{equation*}
    \langle Tx^{}_1,x^{}_2\rangle^{}_2=\overline{\langle T^*x^{}_2,x^{}_1\rangle^{}_1},\quad x^{}_1\in E^{}_1,x^{}_2\in E^{}_2.
\end{equation*}
It is immediate that $T^*$ itself is weakly continuous between $E_2$ and $F_1$ such that $(T^*)^*=T$. 

In what follows, the set of weakly continuous linear operators $T$ between $E$ and $F$ will be denoted by $\lef$.
Clearly, if $T\in \lef$, then also $T^*\in \lef$. This allows us to introduce the notion of \textit{self-adjointness} of a weakly continuous linear operator $T\in\lef$ in a self-evident way. Note that  $T^*=T$  if and only if
\begin{equation}\label{E:self-adj}
    \dual{Tx}{y}=\overline{\dual{Ty}{x}},\qquad x,y\in E.
\end{equation}
We also worth to note that any linear operator $T:E\to F$ satisfying \eqref{E:self-adj} is automatically weakly continuous and self-adjoint. Furthermore, it is also readily seen that $T\in\lef$ is self-adjoint if and only if its \textit{quadratic form} is real, that is, $\dual{Tx}{x}\in \dupR$ for every $x\in E$.

An important subclass of weakly continuous operators, which is the main focus of this paper, is the cone of positive operators. A linear operator $A:E\to F$ is called positive if it satisfies
\begin{equation*}
    \dual{Ax}{x}\geq 0,\qquad x\in E.
\end{equation*}
Note that every positive operator is weakly continuous and self-adjoint (see e.g. \cite{TarcsayMathNach}*{Proposition 2.3}). The cone of positive operators will be denoted by the symbol $\lefpoz$.  % Obviously, the notions of self-adjointness and positivity in the anti-dual pair context are fully analogous to the corresponding notions of Hilbert spaces.

Considering a weakly continuous linear operator $T:E\to\mathcal{H}$ mapping $E$ to a Hilbert space $\mathcal{H}$, its adjoint acts as $T^*:\mathcal{H}\to F$, moreover, their composition, $T^*T:E\to F$ satisfies
\begin{equation}
    \dual{T^*Tx}{x}=\sip{Tx}{Tx},\quad x\in E.
\end{equation}
It is immediate that  $T^*T\in\lefpoz$ is a positive operator.

In the following we shall show that every positive operator $A\in\lefpoz$ admits a factorization $A=T^*T$ through an associated Hilbert space. However, this can only be done under an extra topological condition on $\dual FE$. We say that the anti-dual pair $\dual FE$ is $w^*$-sequentially complete if the topological vector space $(F,w(F,E))$ is sequentially complete. We remark that $w^*$-sequentially completeness is a kind of `weak Banach-Steinhaus property' for the anti-dual pair: it says that if the sequence $\seq f$ of $F$ is pointwise Cauchy (that is, $\dual{f_n}{x}$ is a Cauchy sequence for every $x$ in $E$), then there is an $f\in F$ such that $\dual{f_n}{x}\to \dual{f}{x}$ for every $x\in E$. As a corollary one concludes that if $X$ is a Banach space (or, more generally, if $X$ is a barrelled space), then $(X,\anti X)$ forms a $w^*$-sequentially complete  anti-dual pair. Another, much more obvious example for a $w^*$-sequentially complete anti-dual pair is $\dual{\bar{X}^*}{X}$, where $X$  is an arbitrary vector space and $\bar{X}^*$ is its algebraic anti-dual.
 
  Let now $\dual{F}{E}$ be a  $w^*$-sequentially complete anti-dual pair and let $A\in\lefpoz$.  The range  $\ran A$ can be equipped with an natural inner product  
\begin{equation*}
    \sipa{Ax}{Ay}:=\dual{Ax}{y},\quad x,y\in E
\end{equation*}
so that $(\ran A,\sipa\cdot\cdot )$ becomes a pre-Hilbert space. 
Let $\hila$ denote its completion and consider the canonical embedding operator 
\begin{equation}
    J_A(Ax):=Ax,\quad x\in E
\end{equation}
of $\ran A\subseteq \hila$ into $F$. It can be shown that $J_A$ is weakly continuous, so that, by $w^*$-sequentially completeness of $\dual FE$, it uniquely extends to a continuous weakly continuous operator $J_A\in\lhaf$. Its adjoint  $J_A^*\in\leha$ satisfies
\begin{equation}\label{E:JA*x}
    J_A^*x=Ax,\quad x\in E,
\end{equation}
that implies the following useful factorization of $A$:
\begin{equation}\label{E:factorization}
    A=J_A^{}J_A^*.
\end{equation}
Details of the above construction can be found in \cite{TARCSAY2020Lebesgue}. 

%If a positive operator $B\in\lefpoz$ satisfies $B\le cA$ with auxiliary $c\ge 0$, then we can also factorize $B$ through the corresponding Hilbert space $\mathcal{H}_A$ as the following:
%\begin{equation*}
%    B=J_A^{}\widetilde{B}J_A^*.
%\end{equation*}

To conclude this section we introduce some important concepts and results from the theory of Lebesgue-type decomposition of operators. Given two positive operators $A,B\in\lefpoz$  we say that $B$ is absolutely continuous with respect to $A$ (notated by $B\ll A$), if for any sequence $(x_n)_{n\in\mathbb{N}}\subset E$, $\dual{Ax_n}{x_n}\to 0$ and $\dual{B(x_n-x_m)}{x_n-x_m}\to 0$ imply $\dual{Bx_n}{x_n}\to 0$. $A$ and $B$ are called mutually singular (notated by $A\perp B$), if for any positive operator $C\in\lefpoz$, the relations $C\le A$ and $C\le B$ imply $C=0$. It is important to note that $B\ll A$ if and only if there exist an increasing sequence of positive operators $(B_n)_{n\in\mathbb{N}}$ strongly converging to $B$ and a sequence $(\alpha_n)_{n\in\mathbb{N}}$ of nonnegative numbers satisfying $B_n\le\alpha_nA$ for all $n\in\mathbb{N}$ (see \cite{TARCSAY2020Lebesgue}*{Theorem 5.1}).

In \cite{TARCSAY2020Lebesgue}*{Theorem 3.3} it was proved that there always exists a decomposition of $B$ to the sum of operators $B_a$ and $B_s$ such that  $B_a\ll A$ and $B_s\perp A$. According to \cite{TARCSAY2020Lebesgue}*{Theorem 7.2}, such a decomposition is not unique in general, but there exists a distinguished decomposition $B=B_a+B_s$ to absolutely continuous and singular parts, where the absolutely continuous part $B_a$ has the following maximal property:
\begin{equation}\label{E:Ba}
    B_a=\max\set{C\in\lefpoz}{C\le B,C\ll A}.
\end{equation}
Following Ando's notations in \cite{ando1976lebesgue}, we will  denote the maximal absolute continuous operator in \eqref{E:Ba} by $[A]B:=B_a$, and we shall call it the $A$-absolutely continuous part of $B$.

Another important notion related to the Lebesgue decomposition theory is the so-called parallel sum. In \cite{TARCSAY2020Lebesgue}*{Theorem 4.1} it was proved that for any $A,B\in\lefpoz$ positive operators there exists a unique positive operator $A:B$ satisfying
\begin{equation}
    \dual{(A:B)x}{x}=\inf\set{\dual{A(x-y)}{x-y}+\dual{By}{y}}{y\in E},\quad x\in E.
\end{equation}
Moreover, the parallel sum is a commutative operation satisfying $A:B\le A$ and $A:B\le B$. It is also monotonous in the sense that $A_1\le A_2$ and $B_1\le B_2$ imply $A_1:B_1\le A_2:B_2$. The significance of the parallel sum in Lebesgue decomposition theory is that it can be used to obtain the absolute continuous part, namely:
\begin{align*}
    \dual{[A]Bx}{y}=\limn \dual{(A:nB)x}{y},\qquad x,y\in E.
\end{align*}
The singularity can also be characterized in terms of the parallel sum:  $A$ and $B$ are singular if and only if $A:B=0$ (see \cite{TARCSAY2020Lebesgue}*{Theorem 6.1}).
%-w*-teljesség

%-Pozitív op. faktorizációja

%-Lebesgue felbontás és kapcs. fogalmak, absz. folyt, szingularitás ekvivalensei (párhuzamos összeg)

%-$\sip{Ax}{y}$, 
%-$$\set[\bigg]{x}{\int A(x)<+\infty}, \sipc[\Big]{Ax}{\int f}, \dual[\big]{f}{x}$$
%%%%%%%%%%%%%%%%%%%%%%%%%%%%%%%%%%%%%%%%%%%%%%%%%%%%%%%%%%%%%%%%%%%%%%%%%%%%%

%At this point one can easily see, that $A\ll B$ if and only if $[A]B=B$, while \cite{TARCSAY2020Lebesgue}*{theorem} gives useful characterizations of singularity, namely, that the following assertions are equivalent:
%\begin{enumerate}
 %   \item $A\perp B$,
  %  \item $A:B=0$,
  %  \item $\set{(Ax,Bx)}{x\in E}$ is dense in $\mathcal{H}_A\times\mathcal{H}_B$,
   % \item $\xi=0$ is the only element of $\mathcal{H}_B$ satisfying $|\sip{Bx}{\xi}_B|^2\le M_\xi\dual{Ax}{x}$ for all $x\in E$,
   % \item $\mathcal{M}=\mathcal{H}_B$,
    %\item $\ran J_A\cap\ran J_B=\{0\}$,
    %\item for all $x\in E$ there exists a sequence $(x_n)_{n\in\mathbb{N}}\subset E$ such that
    %\begin{equation*}
     %   \dual{Ax_n}{x_n}\to 0,\quad\dual{B(x-x_n)}{x-x_n}\to 0.
   % \end{equation*}
%\end{enumerate}

\section{Arlinskii's iteration}

In this section, we will apply an iterative procedure, based on the approach of Arlinskii \cite{arlinskii2017}, to produce a Lebesgue decomposition of a positive operator with respect to another. A similar procedure was used by Titkos \cite{titkos2019arlinskii} to construct the Lebesgue decomposition of non-negative sesquilinear forms. The context we consider here can be seen as a common generalization of both methods, as we will demonstrate in the next section.

The key feature of this method is that it identifies the singular part of the Lebesgue decomposition as the fixed point of an appropriately defined operation. The fixed point itself (i.e., the singular part) is then obtained as the strong limit of the operator sequence generated by iterating this operation.

For a fixed positive operator $A\in\lefpoz$ let us consider the map $\mu_A:\lefpoz\to\lefpoz$ defined as follows
\begin{equation*}
    \mu_A(X)\coloneqq X-X:A.
\end{equation*}
A positive operator $X$ is $A$-singular if and only if $\mu_A(X)=X$, because $X\perp A$ is equivalent to $X:A=0$. 

Let us consider now another positive operator $B$ and introduce the sequence $\seq B$ by letting 
\begin{equation}\label{E:iteration}
    B_0\coloneqq \mu_A^{[0]}(B)\coloneqq B,\qquad B_{n+1}\coloneqq \mu_A^{[n+1]}(B)\coloneqq \mu^{}_A(\mu_A^{[n]}(B))=B_n-B_n:A.
\end{equation}
The main purpose of this paragraph is to show that the sequence $\seq B$ converges pointwise to a positive operator $B_s$, that is the identical with  the fixed point of the operation $\mu_A$. Furthermore, one has 
\begin{equation*}
    B_s=B-[A]B.
\end{equation*}
 In other words, the decomposition $B=(B-B_s)+B_s$ is the is identical with the Lebesgue decomposition proved in the \cite{TARCSAY2020Lebesgue}*{Theorem 3.3}.
 
To begin with we prove a weaker version of the above statement. However, this statement gives a short and simple proof of the existence of the Lebesgue decomposition in the context of anti-dual pairs.
\begin{proposition}\label{P:3.1}
    There exists a strong limit of the sequence $(B_n)_{n\in\mathbb{N}}$ denoted by $B_s$, moreover $B_s$ is $A$-singular and $B_a\coloneqq B-B_s$ is $A$-absolutely continuous.
\end{proposition}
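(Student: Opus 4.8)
The plan is to show first that the iteration produces a \emph{decreasing} sequence of positive operators, then to identify its pointwise limit as an operator using $w^*$-sequential completeness, and finally to read off both singularity and absolute continuity from the telescoping structure of the iteration together with the monotonicity of the parallel sum.

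First I would note that each step keeps us in $\lefpoz$ and decreases the operator: since $B_n:A\le B_n$, we have $0\le B_{n+1}=B_n-B_n:A\le B_n$, so $\seq B$ is a decreasing sequence in $\lefpoz$. Consequently, for every fixed $x\in E$ the scalars $\dual{B_n x}{x}$ form a decreasing nonnegative, hence convergent, sequence; by polarization $\dual{B_n x}{y}$ converges for all $x,y\in E$. To manufacture the limit \emph{operator}, fix $x$ and observe that $(B_n x)_{n\in\dupN}$ is pointwise Cauchy in $F$; invoking $w^*$-sequential completeness of $\dual FE$ yields an element $B_s x\in F$ with $\dual{B_n x}{y}\to\dual{B_s x}{y}$ for every $y$. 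The resulting map $B_s\colon E\to F$ is linear and satisfies $\dual{B_s x}{x}=\limn\dual{B_n x}{x}\ge 0$, so $B_s\in\lefpoz$ and $B_n\to B_s$ strongly. Passing to the limit in $\dual{B_m x}{x}\le\dual{B_n x}{x}$ (for $m\ge n$) also records the useful inequality $B_s\le B_n$ for every $n$.

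Next I would establish $A$-singularity of $B_s$, that is, $B_s:A=0$. The iteration gives $B_n:A=B_n-B_{n+1}$, whose quadratic form $\dual{B_n x}{x}-\dual{B_{n+1}x}{x}$ tends to $0$ because $\dual{B_n x}{x}$ converges. On the other hand $B_s\le B_n$ and monotonicity of the parallel sum give $0\le B_s:A\le B_n:A$, so $0\le\dual{(B_s:A)x}{x}\le\dual{(B_n:A)x}{x}\to 0$ for each $x$. Since a positive operator with vanishing quadratic form is $0$, we conclude $B_s:A=0$, which is precisely $B_s\perp A$. For absolute continuity of $B_a=B-B_s$ I would use the telescoping identity $B-B_n=\sum_{k=0}^{n-1}(B_k-B_{k+1})=\sum_{k=0}^{n-1}B_k:A$. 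Setting $C_n:=B-B_n$, the sequence $(C_n)_{n\in\dupN}$ is increasing (each increment is $B_n:A\ge 0$) and converges strongly to $B_a$; since $B_k:A\le A$ for every $k$, we get $C_n\le nA$. Thus $(C_n)_{n\in\dupN}$ is an increasing sequence of positive operators converging strongly to $B_a$ with $C_n\le\alpha_n A$ for $\alpha_n:=n$, which is exactly the criterion for $B_a\ll A$ recalled in the preliminaries.

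I expect the one genuinely non-formal point to be the construction of $B_s$ as an element of $\lefpoz$ rather than as a mere limit of forms: this is where $w^*$-sequential completeness is indispensable, and it is the step that has no counterpart in the purely scalar form setting. Everything else reduces to the elementary order properties $B_n:A\le B_n$ and $B_k:A\le A$ and to the monotonicity of the parallel sum.
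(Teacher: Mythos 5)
Your proposal is correct and follows essentially the same route as the paper: monotone decrease of $(B_n)_{n\in\mathbb{N}}$, polarization plus $w^*$-sequential completeness to build $B_s\in\lefpoz$, and the telescoping bound $B-B_n=\sum_{k=0}^{n-1}B_k:A\le nA$ for absolute continuity. The only (minor) divergence is in the singularity step, where the paper asserts the limit interchange $A:B_s=\lim_n(A:B_n)$ directly, while you instead sandwich $0\le B_s:A\le B_n:A=B_n-B_{n+1}$ using $B_s\le B_n$ and monotonicity of the parallel sum --- a slightly more careful variant that avoids justifying that interchange.
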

\begin{proof}
    Since $0\le X:G\le X$ for any $X,G$ positive operators we have $B\ge B_n\ge B_{n+1}\ge 0$ for any $n\in\mathbb{N}$. The sequence $\langle B_nx,x\rangle$ consists of nonnegative real numbers and monotonely decreases, therefore it is convergent. Using the well-known polarization formula for sesquilinear functions we have
    \begin{equation*}
        \langle B_mx-B_nx,y\rangle=\frac{1}{4}\sum_{k=0}^3i^k\langle(B_m-B_n)(x+i^ky),x+i^ky\rangle
    \end{equation*}
    which implies that for any $x,y\in E$, the sequence $\langle B_nx,y\rangle$ converges too. Since the anti-dual pair $\langle F,E\rangle$ is $w^*$-sequentially complete, it follows, that the functionals
    \begin{equation*}
        f_x:E\to\mathbb{C};\quad y\mapsto\lim_{n\to\infty}\langle B_nx,y\rangle
    \end{equation*}
    are continuous for every $x\in E$, so they belong to $F$ accordingly. Consider the operator
    \begin{equation*}
        B_s:E\to F;\quad x\mapsto f_x.
    \end{equation*}
    It is immediate that $B_s$ is a positive (hence continuous)  operator.

    First we show that 
    $B_s$ is $A$-singular. Indeed,  we have
    \begin{equation*}
        A:B_s=\lim_{n\to\infty}(A:B_n)=\lim_{n\to\infty}(B_n-B_{n+1})=0,
    \end{equation*}
    which implies that $A$ and $B_s$ are mutually singular due to \cite{TARCSAY2020Lebesgue}*{Theorem 6.1}.
    
    To see that $B_a\coloneqq B-B_s$ is absolutely continuous with respect to $A$, it suffices to notice that $B_a$ is the strong limit of the   monotone increasing sequence $(B-B_n)_{n\in\mathbb{N}}$, where every  member of this sequence can be estimated as follows: 
    \begin{equation*}
        B-B_n=\sum_{k=0}^{n-1}B_k:A\le nA.
    \end{equation*}
    Hence $B_a\ll A$ due to \cite{TARCSAY2020Lebesgue}*{Theorem 5.1}. 
\end{proof}

Consider now the positive operators $A,B\in\lefpoz$ and let $C\coloneqq A+B$. Since have $A,B\leq C$, it follows that there exist two positive contractions $\widetilde{A},\widetilde{B}$ on the auxiliary Hilbert space $\hilc$ such that 
\begin{equation*}
    A=J_C^{}\widetilde{A}J_C^* \qquad\mbox{and}\qquad B=J_C^{}\widetilde{B}J_C^*
\end{equation*}
Note that $\widetilde{A}+\widetilde{B}=I_C$ where $I_C$ stands for the identity operator on $\mathcal{H}_C$.

 The following lemma shows the relationship between the parallel sum of the operators $A,B\in \lefpoz$ and of the associated contractions $\widetilde{A}, \widetilde{B}\in\mathscr{B}_+(\hilc)$:
%The following lemma shows the connection between the parallel sum and the factorization of operators:
\begin{lemma}\label{L:2.1}
    Let $C$ denote $A+B$ and $J_C$ be the embedding of Hilbert space $\mathcal{H}_C$ into $F$, then we have
    \begin{equation*}
        J^{}_C(\widetilde{A}:\widetilde{B})J_C^*=A:B.
    \end{equation*}
\end{lemma}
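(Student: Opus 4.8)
The plan is to push everything down into the auxiliary Hilbert space $\hilc$ and reduce the operator identity to an equality of quadratic forms. Writing $\xi_x\coloneqq J_C^*x=Cx\in\hilc$ and using the defining adjoint relation $\dual{J_Cu}{x}=\overline{\sipc{J_C^*x}{u}}$ together with the self-adjointness of $\widetilde{A}$, one computes
\[
\dual{Ax}{x}=\overline{\sipc{\xi_x}{\widetilde{A}\xi_x}}=\sipc{\widetilde{A}\xi_x}{\xi_x},\qquad \dual{Bx}{x}=\sipc{\widetilde{B}\xi_x}{\xi_x},
\]
and, by the same computation applied to the positive operator $\widetilde{A}:\widetilde{B}$,
\[
\dual{J_C(\widetilde{A}:\widetilde{B})J_C^*x}{x}=\sipc{(\widetilde{A}:\widetilde{B})\xi_x}{\xi_x},\qquad x\in E.
\]
Since every operator in sight is positive, hence self-adjoint, the polarization formula shows that it suffices to prove that the quadratic forms of $A:B$ and of $J_C(\widetilde{A}:\widetilde{B})J_C^*$ coincide, i.e.\ that $\dual{(A:B)x}{x}=\sipc{(\widetilde{A}:\widetilde{B})\xi_x}{\xi_x}$ for every $x\in E$.

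Next I would expand both sides using the infimum characterization of the parallel sum recalled above: on the left on the anti-dual pair $\dual FE$, and on the right on the Hilbert space $\hilc$, which is its own anti-dual pair with the inner product $\sipc\cdot\cdot$ as anti-duality. Substituting the form identities of the previous paragraph, the desired equality becomes
\[
\inf_{y\in E}\bigl\{\sipc{\widetilde{A}(\xi_x-\xi_y)}{\xi_x-\xi_y}+\sipc{\widetilde{B}\xi_y}{\xi_y}\bigr\}=\inf_{\eta\in\hilc}\bigl\{\sipc{\widetilde{A}(\xi_x-\eta)}{\xi_x-\eta}+\sipc{\widetilde{B}\eta}{\eta}\bigr\}.
\]
As $y$ ranges over $E$, the vector $\xi_y=Cy$ ranges exactly over $\ran C=\ran J_C^*$; thus the left-hand infimum is the infimum of the \emph{same} integrand taken over the subspace $\ran C\subseteq\hilc$ rather than over all of $\hilc$.

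The crux is therefore to show that restricting this infimum to the dense subspace $\ran C$ leaves its value unchanged. The inequality $\ge$ is immediate from $\ran C\subseteq\hilc$. For the reverse, I would use that the functional $\eta\mapsto\sipc{\widetilde{A}(\xi_x-\eta)}{\xi_x-\eta}+\sipc{\widetilde{B}\eta}{\eta}$ is norm-continuous on $\hilc$, which follows at once from the boundedness of the contractions $\widetilde{A},\widetilde{B}$. Given any $\eta_0\in\hilc$ and any $\epsilon>0$, the density of $\ran C$ in $\hilc$ (which holds by the very construction of $\hilc$ as the completion of $\ran C$) furnishes a $\xi_y\in\ran C$ at which the functional differs from its value at $\eta_0$ by at most $\epsilon$; letting $\eta_0$ approach the right-hand infimum yields the matching inequality. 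This density-plus-continuity argument is the only genuine obstacle; once it is settled the two infima agree, the quadratic forms of $A:B$ and $J_C(\widetilde{A}:\widetilde{B})J_C^*$ coincide, and polarization upgrades this to the claimed operator identity.
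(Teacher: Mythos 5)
Your proof is correct and follows essentially the same route as the paper: both reduce the identity to an equality of quadratic forms, rewrite each side via the infimum characterization of the parallel sum, and use the density of $\ran J_C^*$ in $\hilc$ to identify the infimum over $E$ with the infimum over $\hilc$. The only difference is that you spell out the continuity-plus-density argument that the paper compresses into the phrase ``using the density of $\ran J_C^*$,'' which is a welcome elaboration rather than a deviation.
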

\begin{proof}
    Using the density of $\ran J_C^*$ in $\mathcal{H}_C$ we obtain
    \begin{align*}
        \dual{ J_C(\widetilde{A}:\widetilde{B})J_C^*x}{x}&=\sipc{\widetilde{A}:\widetilde{B}(J_C^*x)}{J_C^*x}\\&
        =\inf_{\xi\in\mathcal{H}_C}\{\sipc{\widetilde{A}(J_C^*x+\xi}{J_C^*x+\xi}+\sipc{\widetilde{B}\xi}{\xi}\}\\
        &=\inf_{y\in E}\{\sipc{\widetilde{A}(J_C^*x+J_C^*y)}{J_C^*x+J_C^*y}+\sipc{\widetilde{B}J_C^*y}{J_C^*y}\}\\
        &=\inf_{y\in E}\{\dual{J_C\widetilde{A}(J_C^*x+J_C^*y)}{x+y}+\dual{ J_C^{}\widetilde{B}J_C^*y}{y}\}\\
        &=\dual{(A:B)x}{x} 
    \end{align*}
    for all $x\in E$, hence the statement of the lemma is proved.
\end{proof}

Let us introduce now the operator sequence $(\widetilde{B}_n)_{\nen}$ in $\mathscr{B}_+(\hilc)$ by the following iteration:
\begin{equation*}
    \widetilde{B}_0:=\widetilde{B};\quad\widetilde{B}_{n+1}:=\widetilde{B}_n-\widetilde{A}:\widetilde{B}_n=\widetilde{B}_n-(I_C-\widetilde{B}):\widetilde{B}_n,\quad n\in\mathbb{N}.
\end{equation*}
Note that the sequence $\widetilde{B}_n$ is monotone non-increasing, and therefore it strongly converges to a positive operator $\widetilde{B}\in\mathscr{B}_+(\hilc)$.
Using the statement of Lemma \ref{L:2.1} one can easily see that $B_n=J_C^{}\widetilde{B}_nJ_C^*$ for all $n\in\mathbb{N}$, and $B_s=J_C^{}\widetilde{B}_sJ_C^*$. 

In the next result we shall show that $\widetilde{B}_s$ is in fact an orthogonal projection, namely $B_s=P_\mathcal{M}$, where
\begin{equation} \label{M}
\mathcal{M}:=\ker\widetilde{A} \subseteq \hilc.
\end{equation}

Actually, the following statement holds true:
\begin{theorem}\label{T:3.3}
    Let $A,B\in\lefpoz$, $B_n$, $B_s$ and $\mathcal M$ be as above. Then $\widetilde{B}_s=P_\mathcal{M}$, and we have the following factorization of $B_s$:
    \begin{equation}
        B_s=J_CP_\mathcal{M}J_C^*.
    \end{equation}
   % where subspace $\mathcal{M}$ is defined by \eqref{M}.
\end{theorem}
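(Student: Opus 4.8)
The plan is to work entirely inside the auxiliary Hilbert space $\hilc$ and to identify the strong limit $\widetilde{B}_s$ of the monotone non-increasing sequence $(\widetilde{B}_n)_{\nen}$ with the orthogonal projection $P_\mathcal{M}$ onto $\mathcal{M}=\ker\widetilde{A}$. The decisive structural observation is that here $\widetilde{B}=I_C-\widetilde{A}$, so $\widetilde{A}$ and $\widetilde{B}$ commute; since the parallel sum of two operators commuting with a fixed operator again commutes with it (visible from the regularized formula $\widetilde{A}:\widetilde{B}_n=\lim_{\epsilon\to0^+}\widetilde{A}(\widetilde{A}+\widetilde{B}_n+\epsilon I_C)^{-1}\widetilde{B}_n$, a strong limit), a straightforward induction shows that every $\widetilde{B}_n$ commutes with $\widetilde{A}$, and hence so does the strong limit $\widetilde{B}_s$. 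I would also record at the outset the elementary bounds $0\le\widetilde{B}_s\le\widetilde{B}_0=I_C-\widetilde{A}\le I_C$ and $0\le\widetilde{A}\le I_C$, which will be used below.

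First I would show that $\widetilde{B}_s$ acts as the identity on $\mathcal{M}$. For $\xi\in\mathcal{M}=\ker\widetilde{A}$ one has $\sipc{\widetilde{A}\xi}{\xi}=0$, and since $\widetilde{A}:\widetilde{B}_n\le\widetilde{A}$ this forces $\sipc{(\widetilde{A}:\widetilde{B}_n)\xi}{\xi}=0$; positivity of $\widetilde{A}:\widetilde{B}_n$ then gives $(\widetilde{A}:\widetilde{B}_n)\xi=0$. As $\widetilde{B}_0\xi=(I_C-\widetilde{A})\xi=\xi$, an induction along $\widetilde{B}_{n+1}=\widetilde{B}_n-\widetilde{A}:\widetilde{B}_n$ yields $\widetilde{B}_n\xi=\xi$ for every $n$, whence $\widetilde{B}_s\xi=\xi$; that is, $\widetilde{B}_sP_\mathcal{M}=P_\mathcal{M}$.

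The main point — and the step I expect to be the real obstacle — is the reverse inclusion $\ran\widetilde{B}_s\subseteq\mathcal{M}$. Just as in Proposition~\ref{P:3.1}, the fixed-point property gives $\widetilde{A}:\widetilde{B}_s=0$, since $\widetilde{A}:\widetilde{B}_s\le\widetilde{A}:\widetilde{B}_n=\widetilde{B}_n-\widetilde{B}_{n+1}$ and the right-hand side tends to $0$ strongly; hence $\widetilde{A}\perp\widetilde{B}_s$ by the singularity criterion \cite{TARCSAY2020Lebesgue}*{Theorem 6.1}. Singularity by itself does not pin down $\widetilde{B}_s$, so here I would exploit commutativity: set $C\coloneqq\widetilde{A}\widetilde{B}_s=\widetilde{A}^{1/2}\widetilde{B}_s\widetilde{A}^{1/2}=\widetilde{B}_s^{1/2}\widetilde{A}\widetilde{B}_s^{1/2}\ge0$, where both symmetric forms are legitimate because $\widetilde{B}_s$ commutes with $\widetilde{A}$ and hence with $\widetilde{A}^{1/2}$. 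Using $\widetilde{B}_s\le I_C$ in the first representation gives $C\le\widetilde{A}$, and using $\widetilde{A}\le I_C$ in the second gives $C\le\widetilde{B}_s$; mutual singularity $\widetilde{A}\perp\widetilde{B}_s$ then forces $C=0$, i.e. $\widetilde{A}\widetilde{B}_s=0$. Consequently $\ran\widetilde{B}_s\subseteq\ker\widetilde{A}=\mathcal{M}$, and passing to adjoints (both operators being self-adjoint) this reads $\widetilde{B}_s=\widetilde{B}_sP_\mathcal{M}$.

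Finally I would combine the two inclusions: $\widetilde{B}_s=\widetilde{B}_sP_\mathcal{M}=P_\mathcal{M}$, which is exactly the assertion $\widetilde{B}_s=P_\mathcal{M}$. The factorization of $B_s$ is then immediate, for $B_s=J_C\widetilde{B}_sJ_C^*$ has already been observed, giving $B_s=J_CP_\mathcal{M}J_C^*$. The genuinely delicate part throughout is the handling of the parallel sum — its monotonicity, the domination $\widetilde{A}:\widetilde{B}_n\le\widetilde{A}$, its compatibility with the commutant, and its behaviour under monotone strong limits — but once these standard facts are secured, the commutativity trick that upgrades the mere singularity $\widetilde{A}\perp\widetilde{B}_s$ to the sharp identity $\widetilde{A}\widetilde{B}_s=0$ is what drives the whole argument.
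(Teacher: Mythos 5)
Your proof is correct, but it reaches the crucial identity $\widetilde{A}\widetilde{B}_s=0$ by a genuinely different route than the paper. The paper works algebraically: it proves by induction that each $I_C-\widetilde{B}+\widetilde{B}_n$ is boundedly invertible, derives the explicit recursion $\widetilde{B}_{n+1}=(I_C-\widetilde{B}+\widetilde{B}_n)^{-1}\widetilde{B}_n^{2}$ from the formula $S:T=S(S+T)^{-1}T$, and passes to the strong limit to read off $\widetilde{A}\widetilde{B}_s=0$; the inclusion $\mathcal M\subseteq\{\xi:\widetilde{B}_s\xi=\xi\}$ is then obtained by an induction that again invokes this recursion. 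You instead (i) observe that the parallel sum preserves the commutant of $\widetilde{A}$ (which is indeed justified by the regularized representation of $S:T$, a standard fact from the Pekarev--\v{S}mul'jan/Anderson--Trapp literature the paper already cites, together with stability of commutation under strong limits of bounded sequences), (ii) deduce $\widetilde{A}:\widetilde{B}_s=0$, hence $\widetilde{A}\perp\widetilde{B}_s$ from the singularity criterion, and (iii) use the commutativity trick $\widetilde{A}\widetilde{B}_s=\widetilde{A}^{1/2}\widetilde{B}_s\widetilde{A}^{1/2}=\widetilde{B}_s^{1/2}\widetilde{A}\widetilde{B}_s^{1/2}$ to produce an operator dominated by both $\widetilde{A}$ and $\widetilde{B}_s$, which singularity kills. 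Your treatment of the other inclusion is also lighter than the paper's: the estimate $\widetilde{A}:\widetilde{B}_n\le\widetilde{A}$ alone shows the iterates are constant on $\ker\widetilde{A}$, with no need for the recursion formula. What the paper's computation buys is self-containedness (no appeal to the singularity characterization, only to the invertible-sum formula for the parallel sum) plus the explicit closed form of the iterates; what your argument buys is brevity and a conceptual explanation of why singularity upgrades to the operator identity $\widetilde{A}\widetilde{B}_s=0$, namely commutativity. Both are sound; the only point you should make fully explicit if you write this up is the precise regularization lemma you invoke for step (i), since for a fixed $\epsilon>0$ the expression $\widetilde{A}(\widetilde{A}+X+\epsilon I_C)^{-1}X$ need not be self-adjoint for general $X$ (though it is in your commuting setting, and the symmetric regularization $(\widetilde{A}+\epsilon I_C):(X+\epsilon I_C)$ serves the same purpose).
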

\begin{proof}
    According to Lemma \ref{L:2.1} it is sufficient to prove that $\widetilde{B}_s=P_\mathcal{M}$. Recall from \cite{anderson1975shorted} (or \cite{pekarev1976parallel}) that if $S,T$ are positive operators on a Hilbert space $\hil$ whose sum is invertible, then their parallel sum can be expressed as 
    \begin{equation}\label{E:ParallelInv}
        S:T=S(S+T)^{-1}T=T(S+T)^{-1}S.
    \end{equation}
    This together with equality $\widetilde{A}+\widetilde{B}=I_C$ one gives us
    \begin{equation*}
        \widetilde{A}:\widetilde{B}=\widetilde{A}(\widetilde{A}+\widetilde{B})^{-1}\widetilde{B}=\widetilde{A}\widetilde{B}=\widetilde{B}-\widetilde{B}^2,
    \end{equation*}
    hence
    \begin{equation*}
        \widetilde{B}_1=\widetilde{B}-\widetilde{A}:\widetilde{B}=\widetilde{B}^2.
    \end{equation*}
    One can see that the operator $I_C-\widetilde{B}+\widetilde{B}^2$ is invertible, since
    \begin{equation*}
        I_C-\widetilde{B}+\widetilde{B}^2=\frac{1}{2}(I_C+\widetilde{B}^2+(I_C-\widetilde{B})^2)\ge\frac{1}{2}I_C,
    \end{equation*}
    therefore 
    \begin{equation*}
        \widetilde{A}:\widetilde{B}_1=(I_C-\widetilde{B}):\widetilde{B}^2=(I_C-\widetilde{B})\widetilde{B}^2(I_C-\widetilde{B}+\widetilde{B}^2)^{-1},
    \end{equation*}
    according to \eqref{E:ParallelInv}. Hence
    \begin{equation*}
        \widetilde{B}_2=\widetilde{B}_1-\widetilde{A}:\widetilde{B}_1=\widetilde{B}^2-(I_C-\widetilde{B})\widetilde{B}^2(I_C-\widetilde{B}+\widetilde{B}^2)^{-1}=\widetilde{B}^4(I_C-\widetilde{B}+\widetilde{B}^2)^{-1}.
    \end{equation*}
    In the next step we prove by induction that for all $n\in\mathbb{N}$
    \begin{enumerate}
        \item $I_C-\widetilde{B}+\widetilde{B}_n$ is positive and invertible,
        \item $\widetilde{B}_nB=B\widetilde{B}_n$,
        \item $\widetilde{B}_{n+1}=(I_C-\widetilde{B}+\widetilde{B}_n)^{-1}\widetilde{B}_n^2$.
    \end{enumerate}
    
    (1) We have
    \begin{align*}
        I_C-\widetilde{B}+\widetilde{B}_{n+1}&=I_C-\widetilde{B}+(I_C-\widetilde{B}+\widetilde{B}_n)^{-1}\widetilde{B}_n^2\\
        &=(I_C-\widetilde{B}+\widetilde{B}_n)^{-1}((I_C-\widetilde{B}+\widetilde{B}_n)(I_C-\widetilde{B})+\widetilde{B}_n^2)\\
        &=(I_C-\widetilde{B}+\widetilde{B}_n)^{-1}((I_C-\widetilde{B})^2+\widetilde{B}_n(I_C-\widetilde{B})+\widetilde{B}_n^2)\\
        &=(I_C-\widetilde{B}+\widetilde{B}_n)^{-1}\Bigg(\bigg((I_C-\widetilde{B})+\frac{1}{2}\widetilde{B}_n\bigg)^2+\widetilde{B}_n^2\Bigg).
    \end{align*}
    Using the inequality
    \begin{equation*}
        (I_C-\widetilde{B})+\frac{1}{2}\widetilde{B}_n\ge\frac{1}{2}(I_C-\widetilde{B}+\widetilde{B}_n)
    \end{equation*}
    we can see that $I_C-\widetilde{B}+\widetilde{B}_{n+1}$ is positive and invertible.

    (2) Using formula \eqref{E:ParallelInv} and  by the induction step we have
    \begin{align*}
        \widetilde{B}\widetilde{B}_{n+1}&=\widetilde{B}(I_C-\widetilde{B}+\widetilde{B}_{n+1})^{-1}\widetilde{B}_n^2\\
        &=(I_C-\widetilde{B}+\widetilde{B}_{n+1})^{-1}\widetilde{B}_n^2\widetilde{B}=\widetilde{B}_{n+1}\widetilde{B}.
    \end{align*}
    
    (3) We have
    \begin{equation*}
        \widetilde{A}:\widetilde{B}_{n+1}=(I_C-\widetilde{B}):\widetilde{B}_{n+1}=(I_C-\widetilde{B})\widetilde{B}_{n+1}(I_C-\widetilde{B}+\widetilde{B}_{n+1})^{-1},
    \end{equation*}
    hence
    \begin{align*}
        \widetilde{B}_{n+2}&=\widetilde{B}_{n+1}-\widetilde{A}:\widetilde{B}_{n+1}\\
        &=\widetilde{B}_{n+1}-(I_C-\widetilde{B})\widetilde{B}_{n+1}(I_C-\widetilde{B}+\widetilde{B}_{n+1})^{-1}\\
        &=(I_C-\widetilde{B}+\widetilde{B}_{n+1})^{-1}\widetilde{B}_{n+1}^2,
    \end{align*}
    that proves the recursion formula in (3).
    
    If we take strong limit in $n$ in the recursion formula, we obtain
    \begin{equation*}
        \widetilde{B}_s^{}=(I_C-\widetilde{B}+\widetilde{B}_s^{})^{-1}\widetilde{B}_s^2=(\widetilde{A}+\widetilde{B}_s^{})^{-1}\widetilde{B}_s^2,
    \end{equation*}
    that leads to $\widetilde{A}\widetilde{B}_s=0$, and therefore 
    \begin{equation}\label{E:kerBranA}
        \ker\widetilde{A}^\perp\subseteq \ker\widetilde{B}_s.
    \end{equation}
    On the other hand, we can prove by induction that for any $x\in\ker\widetilde{A}=\ker(I_C-\widetilde{B})$ we have
    \begin{equation*} 
    \widetilde{B}_nx=x.
    \end{equation*}
  Indeed, it is clear for $n=0$. Assume now  $\widetilde{B}_nx=x$ for some $n$, then the recursion formula in (3) gives us
    \begin{align*}
        \widetilde{B}_{n+1}^{}x&=(I_C-\widetilde{B}+\widetilde{B}_n^{})^{-1}\tilde{B}_n^2x=(I_C-\widetilde{B}+\widetilde{B}_n^{})^{-1}x\\
        &=(I_C-\widetilde{B}+\widetilde{B}_n^{})^{-1}(I_C-\widetilde{B}+\widetilde{B}_n^{})x=x.
    \end{align*}
    Passing now to the strong limit gives us 
\begin{equation*} \label{invariance}
        \widetilde{B}_sx=x,\qquad x\in\ker\widetilde{A},
    \end{equation*}
    which, together with \eqref{E:kerBranA}, yields equality $\widetilde{B}_s=P_\mathcal{M}$.
\end{proof}
In Proposition \ref{P:3.1}, we established that the strong limit $B_s$ of the operator sequence $B_n$ obtained through iteration \eqref{E:iteration} is $A$-singular, while $B_a = B - B_s$ is $A$-absolutely continuous. Thus, the equation $B = B_a + B_s$ provides an $A$-Lebesgue decomposition of $B$. In what follows, we will prove that $B_a$ is identical to the maximal absolutely continuous part of $[A]B$. This result simultaneously generalizes Theorem 4.1 of Arlinskii \cite{arlinskii2017} and Theorem 3.1 of Titkos \cite{titkos2019arlinskii}.

\begin{theorem}\label{T:3.4}
    Let $A,B\in\lefpoz$ be positive operators, then we have
    \begin{equation*}
        [A]B=J_C(\widetilde{B}-P_\mathcal{M})J_C^*=B-B_s.
    \end{equation*}
\end{theorem}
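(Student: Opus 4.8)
The plan is to treat the two equalities in the statement separately. The right-hand equality $J_C(\widetilde B-P_{\mathcal M})J_C^*=B-B_s$ is essentially free: Theorem \ref{T:3.3} gives $B_s=J_CP_{\mathcal M}J_C^*$, while $B=J_C\widetilde BJ_C^*$ holds by the construction of $\widetilde B$, so subtracting and using the linearity of $\xi\mapsto J_C\xi J_C^*$ yields the claim at once. Hence all the content lies in the left-hand equality $[A]B=J_C(\widetilde B-P_{\mathcal M})J_C^*$, and my strategy is to push the computation of $[A]B$ down into the auxiliary Hilbert space $\hilc$, where $P_{\mathcal M}$ is explicit.

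First I would extend Lemma \ref{L:2.1} to $B:nA=J_C(\widetilde B:n\widetilde A)J_C^*$ for every $n\in\mathbb N$; the proof of Lemma \ref{L:2.1} applies verbatim with $\widetilde A$ replaced by $n\widetilde A$, since the relation $\widetilde A+\widetilde B=I_C$ is never used there. Recalling the parallel-sum characterization of the absolutely continuous part (namely that $[A]B$ is the strong limit of the parallel sums $B:nA$) together with the density of $\ran J_C^*$ in $\hilc$, the identity $\langle (B:nA)x,y\rangle=\sipc{(\widetilde B:n\widetilde A)J_C^*x}{J_C^*y}$ shows that the strong limit passes through the sandwiching map $\xi\mapsto J_C\xi J_C^*$. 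This gives $[A]B=J_C\bigl(\lim_{n}\widetilde B:n\widetilde A\bigr)J_C^*=J_C\bigl([\widetilde A]\widetilde B\bigr)J_C^*$, so it suffices to prove the purely Hilbert-space identity $[\widetilde A]\widetilde B=\widetilde B-P_{\mathcal M}$.

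For this last identity I would exploit $\widetilde A+\widetilde B=I_C$ to reduce everything to the spectral calculus of the single contraction $\widetilde B$. Since $\widetilde B+n\widetilde A=nI_C-(n-1)\widetilde B\ge I_C$ is invertible, formula \eqref{E:ParallelInv} gives $\widetilde B:n\widetilde A=n\widetilde B(I_C-\widetilde B)\bigl(nI_C-(n-1)\widetilde B\bigr)^{-1}=g_n(\widetilde B)$, where $g_n(t)=nt(1-t)/\bigl(n-(n-1)t\bigr)$ is continuous on $\sigma(\widetilde B)\subseteq[0,1]$. One checks $0\le g_n\le 1$ and $g_n(t)\to t$ for $t\in[0,1)$ while $g_n(1)=0$, so dominated convergence for the spectral measure of $\widetilde B$ forces $g_n(\widetilde B)\to\widetilde B-P_{\ker(I_C-\widetilde B)}$ strongly; as $\ker(I_C-\widetilde B)=\ker\widetilde A=\mathcal M$, this limit is exactly $\widetilde B-P_{\mathcal M}$, completing the proof.

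The genuinely delicate points are the two limit passages, not the algebra. First, one must justify that the strong limit defining $[A]B$ survives the map $\xi\mapsto J_C\xi J_C^*$; this is precisely where $w^*$-sequential completeness and the density of $\ran J_C^*$ are used. Second, the spectral dominated-convergence step needs the uniform bound $0\le g_n\le1$ together with the correct one-sided behaviour at $t=1$, which is exactly what isolates the projection onto $\mathcal M$. Once these analytic steps are secured, the commutativity coming from $\widetilde A+\widetilde B=I_C$ makes the remaining manipulations routine. As an alternative to the spectral computation one could instead argue maximality directly: any candidate $D\in\lefpoz$ with $D\le B$ and $D\ll A$ factors as $D=J_C\widetilde DJ_C^*$ with $\widetilde D\le\widetilde B$, its absolute continuity transfers to $\widetilde D\ll\widetilde A$ (via the increasing-sequence characterization), and $\widetilde D\ll\widetilde A$ forces $\widetilde DP_{\mathcal M}=0$, whence $\widetilde D\le\widetilde B-P_{\mathcal M}$ and so $D\le B-B_s$.
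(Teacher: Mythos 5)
Your proposal is correct, and its skeleton coincides with the paper's: both reduce the problem to the auxiliary Hilbert space $\hilc$ via (the obvious extension of) Lemma \ref{L:2.1}, so that everything comes down to the identity $[\widetilde{A}]\widetilde{B}=\widetilde{B}-P_{\mathcal{M}}$, and both then exploit $\widetilde{A}+\widetilde{B}=I_C$ to express $n\widetilde{A}:\widetilde{B}$ through the spectral measure of $\widetilde{B}$ with the very same function $g_n(t)=nt(1-t)/(n(1-t)+t)$. The one genuine difference is how that last identity is closed. You compute the strong limit of $g_n(\widetilde{B})$ exactly: $0\le g_n\le 1$, $g_n\to f$ pointwise with $f(t)=t$ on $[0,1)$ and $f(1)=0$, so spectral dominated convergence gives $\lim_n(n\widetilde{A}:\widetilde{B})=f(\widetilde{B})=\widetilde{B}-E(\{1\})=\widetilde{B}-P_{\mathcal{M}}$ in one stroke. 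The paper instead only proves the one-sided bound $[\widetilde{A}]\widetilde{B}\le\widetilde{B}-P_{\mathcal{M}}$ from $g_n\le f$, and obtains the reverse inequality by verifying separately that $\widetilde{B}-P_{\mathcal{M}}$ is an $\widetilde{A}$-absolutely continuous minorant of $\widetilde{B}$ (its steps (1) and (2)) and invoking the maximality defining $[\widetilde{A}]\widetilde{B}$. Your route is slightly shorter and makes those two steps unnecessary, at the cost of relying a bit more heavily on the identification of $[\widetilde{A}]\widetilde{B}$ with the strong limit of the parallel sums; the paper's maximality detour has the side benefit of exhibiting explicitly an increasing sequence $A_n\le n\widetilde{A}$ witnessing the absolute continuity of $\widetilde{B}-P_{\mathcal{M}}$. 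Your closing alternative (transferring a competitor $D\le B$, $D\ll A$ to $\hilc$ and showing $\widetilde{D}P_{\mathcal{M}}=0$) is also viable, but note that the final inequality $\widetilde{D}\le\widetilde{B}-P_{\mathcal{M}}$ there quietly uses that $P_{\mathcal{M}}$ is a spectral projection of $\widetilde{B}$ and hence commutes with it; this should be said if you go that way.
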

\begin{proof}
    According to Lemma \ref{L:2.1} we have
    \begin{equation*}
        [A]B=\lim_{n\to\infty}(nA):B=J_C(\lim_{n\to\infty}(n\widetilde{A}):\widetilde{B})J_C^*=J_C[\widetilde{A}]\widetilde{B}J_C^*.
    \end{equation*}
    Now it is sufficient to prove that
    \begin{equation*}
        \widetilde{B}-P_\mathcal{M}=[\widetilde{A}]\widetilde{B}.
    \end{equation*}
 We will prove this in three steps:
    \begin{enumerate}
        \item $P_\mathcal{M}$ is singular with respect to $I_C-\widetilde{B}$,
        \item $\widetilde{B}-P_\mathcal{M}$ is absolutely continuous with respect to $I_C-\widetilde{B}$,
        \item $[I_C-\widetilde{B}]\widetilde{B}\coloneqq \lim\limits_{n\to\infty}n(I_C-\widetilde{B}):\widetilde{B}\le\widetilde{B}-P_\mathcal{M}$.
    \end{enumerate}
%$\displaystyle \int_0^1x \,dx$  

    (1) To see that $P_\mathcal{M}\perp I_C-\widetilde{B}$ it is enough to observe that for any operator $D\geq 0$ satisfying both $D\le P_\mathcal{M}$ and $D\le I_C-\widetilde{B}$ we have both $D|_{\mathcal{M}^\perp}\equiv 0$ and $D|_\mathcal{M}\equiv 0$.
    
    (2) Let $E$ denote the spectral measure of positive contraction  $\widetilde{B}$. We have
    \begin{equation*}
        I_C-\widetilde{B}=\int_0^1 1-t\,dE(t)
    \end{equation*}
    and
    \begin{equation*}
        \widetilde{B}-P_\mathcal{M}=\int_0^1 f(t)\,dE(t)
    \end{equation*}
    where
    \begin{equation} \label{3.1}
        f(t)=\begin{cases} t, & \text{if $t\in[0,1)$,} \\ 0, & \text{if $t=1$.} \\ \end{cases}
    \end{equation}
    Considering functions
    \begin{equation*}
        f_n(t)\coloneqq \max\{t,n(1-t)\},\quad t\in[0,1]
    \end{equation*}
    and operators
    \begin{equation*}
        A_n:=\int_0^1 f_n(t)\,dE(t)
    \end{equation*}
    one can see that sequence $(f_n)_{n\in\mathbb{N}}$ is monotone increasing and pointwise converges to $f$ everywhere on $[0,1]$. Therefore  $(A_n)_{n\in\mathbb{N}}$ is also monotone increasing and converges to $\widetilde{A}=I_C-\widetilde{B}$ in the strong operator topology. On the other hand, $f_n(t)\leq n(1-t)$, hence
    \begin{equation*}
        A_n\le\int_0^1 n(1-t)\,dE(t)=n\widetilde{A},\quad n\in\mathbb{N}.
    \end{equation*}
    Thus we have constructed a monotone increasing sequence of operators $A_n$ strongly converging to $I_C-\widetilde{A}-P_\mathcal{M}=\widetilde{B}-P_\mathcal{M}$, and satisfying $A_n\le n\widetilde{A}$ for all $n\in\mathbb{N}$. Hence $\widetilde{B}-P_\mathcal{M}$ is absolutely continuous with respect to $\widetilde{A}=I_C-\widetilde{B}$.
    
    (3) To see the maximality of $\widetilde{B}-P_\mathcal{M}$ among all absolutely continuous parts it is sufficient to prove that $[I_C-\widetilde{B}]\widetilde{B}\le\widetilde{B}-P_\mathcal{M}$. We have
    \begin{equation*}
        [I_C-\widetilde{B}]\widetilde{B}=\lim_{n\to\infty}n(I_C-\widetilde{B}):\widetilde{B}=\lim_{n\to\infty}\int_0^1\frac{nt(1-t)}{n(1-t)+t}\,dE(t).
    \end{equation*}
    It is easy to check that
    \begin{equation*}
        \frac{nt(1-t)}{n(1-t)+t}\le f(t)
    \end{equation*}
    for every $t\in[0,1]$, where $f$ was defined in \eqref{3.1}. Consequently,
    \begin{equation*}
        [I_C-\widetilde{B}]\widetilde{B}\le\int_0^1 f(t)\,dt=\widetilde{B}P_\mathcal{M},
    \end{equation*}
    which proves (3).

    Finally, we observe that $\widetilde{B}-P_\mathcal{M}\leq [I_C-\widetilde{B}]\widetilde{B}$ due to the maximal property of the latter operator. This observation, together with (3), concludes the proof.
\end{proof}
\begin{corollary} \label{C:3.4}
    Consider $A$, $B$ and $B_s$ as above. We have
    \begin{equation*}
        B-B_s=\max\{C\in\lefpoz\,:\,C\le B,Y\ll A\}.
    \end{equation*}
\end{corollary}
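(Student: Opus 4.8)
The plan is to obtain this corollary as an immediate consequence of Theorem~\ref{T:3.4} together with the defining property \eqref{E:Ba} of the maximal absolutely continuous part. Reading the right-hand side with the evident correction (the condition $Y \ll A$ should be $C \ll A$), it is exactly the set whose maximum defines $[A]B$ in \eqref{E:Ba}.

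First I would recall that, by \eqref{E:Ba}, the operator $[A]B$ is precisely
\begin{equation*}
    [A]B = \max\set{C \in \lefpoz}{C \le B,\ C \ll A},
\end{equation*}
the existence of this maximum being part of the Lebesgue decomposition theory recalled in Section~2 (see \cite{TARCSAY2020Lebesgue}*{Theorem 7.2}).

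Then I would invoke Theorem~\ref{T:3.4}, which asserts $[A]B = B - B_s$. Substituting this identity into the displayed formula above yields
\begin{equation*}
    B - B_s = \max\set{C \in \lefpoz}{C \le B,\ C \ll A},
\end{equation*}
which is the assertion of the corollary.

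The point to emphasize is that no further work is needed here: all the substantive content has already been established in the proof of Theorem~\ref{T:3.4}, where the absolute continuity of $B - B_s$ with respect to $A$ and its maximality among absolutely continuous minorants of $B$ were verified (after transporting the problem to $\hilc$ via Lemma~\ref{L:2.1}) in steps (2) and (3), respectively. The corollary simply restates that theorem in the extremal language of \eqref{E:Ba}, so there is no genuine obstacle to overcome.
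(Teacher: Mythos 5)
Your proposal is correct and matches the paper's (implicit) argument: the paper states this corollary without proof precisely because it is the combination of Theorem~\ref{T:3.4} ($[A]B = B - B_s$) with the defining maximality property \eqref{E:Ba} of $[A]B$, exactly as you describe. You also correctly identified the typo ($Y \ll A$ should read $C \ll A$); nothing further is needed.
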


\section{Applications}
In this section, we present some concrete applications of our general results  obtained above.
\subsection{Hilbert spaces}
The first and most direct application of our results concerns the case of operators on Hilbert spaces, as considered by Arlinskii \cite{arlinskii2017}. If $\hil$ is a complex Hilbert space and $A, B$ are positive operators on $\hil$, then, with $E = F = \hil$ and $\dual{\cdot}{\cdot} = \langle \cdot, \cdot \rangle$, $\dual{F}{E}$ forms a $w^*$-sequentially complete anti-dual pair, and $A, B \in \lefpoz$.
Thus, as a direct application of Theorems \ref{T:3.3} and \ref{T:3.4}, we recover Arlinskii's Lebesgue decomposition theorem:
%The first and most direct application of our results is the case of operators over Hilbert spaces considered by Arlinskii \cite{arlinskii2017}. So, if $\hil$ is a complex Hilbert space and $A,B$ are positive operators on it, then wtih $E=F=\hil$ and $\dual{\cdot}{\cdot}=\sip\cdot\cdot$, $\dual FE$ is a $w^*$-sequentially complete anti-dual pair, and $A,B\in\lefpoz$. 
Thus, as a direct application of Theorems \ref{T:3.3} and \ref{T:3.4}, we recover Arlinskii's Lebesgue decomposition theorem:
\begin{theorem}
    Consider $A,B$ be positive operators in the Hilbert space $\hil$ and let $B_s$ be the strong limit of the sequence $\mu_A^{[n]}(B)$ defined as in \eqref{E:iteration} above. Then we have
    \begin{equation*}
        B-B_s=\max\{Y\in\mathcal{B}^+(\mathcal{H})\,:\,Y\le B,Y\ll A\}.
    \end{equation*}
    In particular, the sum $B=(B-B_s)+B_s$ is an $A$-Lebesgue decomposition of $B$.
\end{theorem}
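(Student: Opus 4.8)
The plan is to recognize that this theorem carries no new analytic content: it is precisely the specialization of the general anti-dual pair machinery of Section 3 to the Hilbert space pair, so the whole proof consists in checking that the standing hypotheses of that theory are satisfied and then quoting Corollary \ref{C:3.4} together with Proposition \ref{P:3.1}. First I would verify that $\dual FE$ with $E=F=\hil$ and $\dual fx\coloneqq\sip fx$ is a $w^*$-sequentially complete anti-dual pair. The inner product is sesquilinear and separating, the induced weak topology $w(\hil,\hil)$ is the usual weak topology of $\hil$ and is Hausdorff, and since $\hil$ is a Banach space the remark from Section 2 (that $(X,\anti X)$ is $w^*$-sequentially complete whenever $X$ is Banach) applies; equivalently, any weakly Cauchy sequence in $\hil$ is norm-bounded by the uniform boundedness principle, and its pointwise limits define a bounded conjugate-linear functional, hence an element of $\hil$ by Riesz representation. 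Consequently the constructions of $\hila$, $J_A$, the parallel sum, and the iteration map $\mu_A$ are all legitimate in this setting.

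Next I would pin down the operator classes. A linear map $A\colon\hil\to\hil$ with $\sip{Ax}{x}\ge 0$ for all $x$ has real quadratic form, hence is symmetric, and by the Hellinger--Toeplitz theorem it is automatically bounded; conversely every bounded positive operator is weakly continuous and positive. Thus $\lefpoz$ coincides exactly with $\mathcal B^+(\hil)$, and the sequence $B_{n+1}=B_n-B_n:A$ from \eqref{E:iteration} is literally $\mu_A^{[n]}(B)$, whose strong limit $B_s$ exists by Proposition \ref{P:3.1}. I would also record that in this special case the abstract relations $\ll$ and $\perp$ and the parallel sum reduce to Ando's classical Hilbert space notions, so that the statement matches Arlinskii's theorem verbatim rather than merely formally.

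With these identifications in hand, the maximality equality is immediate from Corollary \ref{C:3.4}:
\begin{equation*}
    B-B_s=\max\set{C\in\lefpoz}{C\le B,\ C\ll A}=\max\set{Y\in\mathcal B^+(\hil)}{Y\le B,\ Y\ll A}.
\end{equation*}
For the concluding \emph{in particular} clause, Proposition \ref{P:3.1} shows that $B_s$ is $A$-singular while $B-B_s$ is $A$-absolutely continuous, so $B=(B-B_s)+B_s$ is genuinely an $A$-Lebesgue decomposition. I do not anticipate any genuine obstacle here, since all the substantive work was done in the general theorems; the only points demanding care are the verification of $w^*$-sequential completeness of $\dual\hil\hil$ and the identification $\lefpoz=\mathcal B^+(\hil)$ via Hellinger--Toeplitz, both of which are routine.
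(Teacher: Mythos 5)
Your proposal is correct and follows exactly the route the paper itself takes: it observes that $E=F=\hil$ with the inner product forms a $w^*$-sequentially complete anti-dual pair, identifies $\lefpoz$ with $\mathcal{B}^+(\hil)$, and then invokes the general results of Section 3 (Proposition \ref{P:3.1} and Corollary \ref{C:3.4}). Your write-up is in fact somewhat more careful than the paper, which leaves the verification of the standing hypotheses implicit.
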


\subsection{Nonnegative forms}
In this subsection, we will show how we can use the results from section 3 to recover \cite{titkos2019arlinskii}*{Theorem 3.1} of Titkos on nonnegative sesquilinear forms. 

Let $X$ be a complex vector space and  denote by $\mathcal{F}^+(X)$ the set of  nonnegative sesquilinear forms on $X$. Consider the following partial ordering of $\mathcal{F}^+(X)$: for two forms $\tform$ and $\wform$ we have $\tform\le\wform$ if $\tform[x]\le\wform[x]$ for all $x\in X$ where $\tform[x]:=\tform(x,x)$ denotes the quadratic form. 

We say the form $\tform$ is $\wform$-closable if
\begin{equation}
    \tform[x_n-x_m]\to 0\quad\text{and}\quad\wform[x_n]\to 0)\quad\mbox{imply}\quad\tform[x_n]\to 0.
\end{equation}
On the contrary,  $\tform$ is $\wform$ are called mutually singular if for all $\sform\in\mathcal{F}^+(X)$ the inequalities $\sform\le\tform$ and $\sform\le\wform$ imply  $\sform=0$ is the zero form. 

Imitating Arlinskii's iterative procedure, Titkos \cite{titkos2019arlinskii} provided a short and elementary proof of the existence of the Lebesgue decomposition for sesquilinear forms. However, this result is somewhat weaker than the Lebesgue decomposition theorem established by Hassi, Sebestyén, and de Snoo \cite{hassi2009lebesgue}*{Theorem 2.11}, as the "maximality property" of the absolutely continuous part was not proven. (Recall that the Lebesgue decomposition is not unique for forms, so this maximality property is not self-evident.)
In what follows, we will show that the decomposition obtained through Titkos's iterative method is, in fact, identical to the extremal Lebesgue decomposition constructed by Hassi, Sebestyén, and de Snoo.

%%%%%%%%%%%%%

First of all, recall that the parallel sum of two forms $\tform, \wform$ is defined to to be the unique form $\tform:\wform$ whose quadratic form is given by 
\begin{equation*}
    (\tform:\wform)[x]:=\inf\{\wform[x-y]+\tform[y]\,:\,y\in X\},\quad x\in X.
\end{equation*}
Using the above notion we can consider then the mapping $ \mu_{\wform}:\mathcal{F}^+(X)\to\mathcal{F}^+(X)$ defined by 
\begin{equation*}
   \mu_{\wform}(\tform):=\tform-\tform:\wform,
\end{equation*}
and the iteration
\begin{equation*}
    \mu_{\wform}^{[0]}(\tform)=\tform;\qquad\mu_{\wform}^{[n]}(\tform)=\mu_{\wform}(\mu_{\wform}^{[n-1]}(\tform)).
\end{equation*}
Every nonnegative form $\tform\in\mathcal{F}^+(X)$ defines an positive operator $T$ on the anti-dual pair $\dual{\anti X}{X}$, namely by letting
\begin{equation*}
    \langle Tx,y\rangle:=\tform(x,y),\quad x,y\in X.
\end{equation*}
(Here, $\anti X$ stands for the algebraic anti-dual of the vector space $X$.)
Let $\wform$ be another form on $X$, and define the positive operator $W$ in the self-evident way. Then 
\begin{equation*}
    \langle(T:W)x,y\rangle=(\tform:\wform)(x,y),\qquad x,y\in X,
\end{equation*}
and therefore
\begin{equation*}
    \langle\mu_W^{[n]}(T)x,y\rangle=\mu_{\wform}^{[n]}(\tform)(x,y),\qquad x,y\in X,n\in\mathbb{N}.
\end{equation*}
Let $T_s$ denote the corresponding limit of the operator sequence $(\mu_W^{[n]}(T))_{\nen}$, in the sense of section 3, and introduce the form $\tform_s$ by setting
\begin{equation*}
    \tform_s(x,y):=\langle T_s x,y\rangle,\quad x,y\in X.
\end{equation*}
Then  
\begin{equation*}
     \tform_s(x,y)=\limn \mu_{\wform}^{[n]}(\tform)(x,y),\qquad x,y\in X.
\end{equation*}

Applying the results of Section 3 to $T_s$, we  immediately obtain the following statement:
\begin{theorem} \label{T:4.1}
    Let $\tform$ and $\wform$ be forms on $X$. Then $\tform=(\tform-\tform_s)+\tform_s$ is a Lebesgue-type decomposition of $\tform$, where
    \begin{equation*}
        \tform-\tform_s=\max\{\sform\in\mathcal{F}^+(X)\,:\,\sform\le\tform,\sform\ll\wform\}.
    \end{equation*}
\end{theorem}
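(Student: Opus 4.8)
The plan is to transport the operator-level results of Section~3 to the present setting through the order-preserving correspondence between nonnegative forms and positive operators. Since $X$ is an arbitrary vector space and $\anti X$ is its full algebraic anti-dual, the anti-dual pair $\dual{\anti X}{X}$ is $w^*$-sequentially complete, so every result of Section~3 is available for the operators $T,W\in\mathscr{L}_+(X,\anti X)$ associated with $\tform$ and $\wform$. Choosing $A\coloneqq W$ and $B\coloneqq T$, the iteration \eqref{E:iteration} produces precisely the sequence $\mu_W^{[n]}(T)$ whose strong limit is $T_s$, in agreement with the form iteration $\mu_{\wform}^{[n]}(\tform)$.

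First I would record the dictionary between the two settings. The assignment $\sform\mapsto S$, $\dual{Sx}{y}\coloneqq\sform(x,y)$, is a bijection of $\mathcal{F}^+(X)$ onto $\mathscr{L}_+(X,\anti X)$ which preserves the partial order ($\sform_1\le\sform_2$ iff $S_1\le S_2$) and, as already observed above, the parallel sum. A direct comparison of the two definitions shows that the remaining notions also match: $\tform$ is $\wform$-closable exactly when $T\ll W$, since both assert that $\wform[x_n]\to0$ together with $\tform[x_n-x_m]\to0$ force $\tform[x_n]\to0$; and $\tform\perp\wform$ exactly when $T\perp W$, since order-preservation identifies the statements ``$\sform'\le\tform,\wform\Rightarrow\sform'=0$'' and ``$S'\le T,W\Rightarrow S'=0$''.

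With this dictionary the theorem is immediate. By Proposition~\ref{P:3.1} the limit $T_s$ is $W$-singular while $T-T_s$ is $W$-absolutely continuous; translating back gives $\tform_s\perp\wform$ and $\tform-\tform_s\ll\wform$, so that $\tform=(\tform-\tform_s)+\tform_s$ is a Lebesgue-type decomposition. By Corollary~\ref{C:3.4} the absolutely continuous part is maximal, namely $T-T_s=\max\{C\in\mathscr{L}_+(X,\anti X):C\le T,\ C\ll W\}$, and applying the order-isomorphism to this identity yields
\begin{equation*}
    \tform-\tform_s=\max\{\sform\in\mathcal{F}^+(X)\,:\,\sform\le\tform,\ \sform\ll\wform\}.
\end{equation*}
The only genuine verification here is the dictionary of the middle paragraph --- that closability and singularity of forms correspond exactly to absolute continuity and singularity of the associated operators; once this is checked, everything is a mechanical transport of Corollary~\ref{C:3.4}. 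The one point that must not be overlooked is that $\dual{\anti X}{X}$ really does satisfy the $w^*$-sequential completeness hypothesis underlying all of Section~3, which is exactly the algebraic anti-dual example recorded in the Preliminaries.
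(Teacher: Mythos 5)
Your proposal is correct and follows essentially the same route as the paper: the theorem is obtained by transporting Proposition~\ref{P:3.1} and Corollary~\ref{C:3.4} through the order- and parallel-sum-preserving bijection $\sform\mapsto S$ between $\mathcal{F}^+(X)$ and $\mathscr{L}_+(X,\anti X)$, using that $\dual{\anti X}{X}$ is $w^*$-sequentially complete. Your explicit verification that closability and singularity of forms match absolute continuity and singularity of the associated operators is exactly the (largely implicit) content of the paper's argument.
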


\subsection{Representable functionals}

Let $\mathcal{A}$ be a (not necessarily unital) *-algebra. We call a linear functional $w:\mathcal{A}\to\mathbb{C}$ positive if $w(a^*a) \geq 0$ for all $a \in \mathcal{A}$. Similarly, for the positive functionals $v$ and $w$, we write $w \leq v$ if $v - w$ is a positive functional. A linear functional $w$ is called representable if there exists a triplet $(\mathcal{H}_w, \pi_w, \zeta_w)$, consisting of a Hilbert space $\mathcal{H}_w$, a *-representation $\pi_w: \mathcal{A} \to \mathcal{B}(\mathcal{H}_w)$, and a vector $\zeta_w \in \mathcal{H}_w$, such that
%Let $\mathcal{A}$ be a (not necessarily unital) *-algebra. We call a linear functional $w:\mathcal{A}\to\mathbb{C}$ positive if $w(a^*a)\ge 0$ for all $a\in\mathcal{A}$, and, similarly as before, for functionals $v$ and $w$ we write $w\le v$ if $v-w$ is a positive functional. We call a linear functional $w$ representable, if there exists a triplet $(\mathcal{H}_w,\pi_w,\zeta_w)$ consisting of a Hilbert space $\mathcal{H}_w$, a *-representation $\pi_w:\mathcal{A}\to\mathcal{B}(\mathcal{H}_w)$  and a vector $\zeta_w\in\mathcal{H}_w$ such that
\begin{equation*}
    w(a)=\sip{\pi_w(a)\zeta_w}{\zeta_w},\qquad a\in\mathcal{A}.
\end{equation*}
We refer to the system $(\mathcal{H}_w, \pi_w, \zeta_w)$ as the GNS triplet corresponding to $w$.

Throughout the following, we will denote by $\mathcal{A}_R^{\dag}$ the partially ordered set of representable functionals. Note that every representable functional is positive, but the converse is not generally true for *-algebras. (In some special cases, such as in a unital Banach *-algebra or a $C^*$-algebra, positivity implies representability. For more details, see \cite{palmer}*{Chapter 9}.) Nevertheless, positive functionals that are majorized by representable functionals are themselves representable. More precisely, if $v \in \mathcal{A}_R^{\dag}$ and $u$ is a positive functional satisfying $u \leq v$, then both $u$ and $v - u$ belong to $\mathcal{A}_R^{\dag}$. Furthermore, the pointwise limit $w$ of a monotone decreasing sequence $(w_n)_{n \in \mathbb{N}}$ of representable functionals is also representable.

If $w, v \in \mathcal{A}_R^{\dag}$ are representable functionals, we say that $w$ is absolutely continuous with respect to $v$ if $v(a_n^*a_n) \to 0$ and $w((a_n - a_m)^*(a_n - a_m)) \to 0$ imply $w(a_n^*a_n) \to 0$. Conversely, we say that $w$ and $v$ are mutually singular if $u \leq w$ and $u \leq v$ imply $u = 0$ for all $u \in \mathcal{A}_R^{\dag}$ (see \cite{gudder1979radon}). For various characterizations of singularity and absolute continuity, as well as a Lebesgue decomposition theorem for representable functionals, the reader is referred to \cite{TARCSAY2020Lebesgue}.
 
Titkos \cite{titkos2019arlinskii} demonstrated that the Lebesgue-type decomposition of representable functionals into absolutely continuous and singular parts can be obtained using Arlinskii's iteration method. However, it should be noted that the Lebesgue-type decomposition of representable functionals (even over $C^*$-algebras) is not unique in general. According to a recent result by Szűcs and Takács \cite{SzucsTakacs}, the Lebesgue-type decomposition is unique for all functionals $w, v \in \mathcal{A}_R^{\dag}$ if and only if every topologically irreducible representation of $\mathcal{A}$ is finite-dimensional. For this reason, in the general case, it is not clear which Lebesgue decomposition is recovered by the iteration method. We show that this procedure also yields the extremal decomposition obtained in \cite{TARCSAY2020Lebesgue}.

Each representable functional $w$ induces a nonnegative form $\tform_w$ on $\mathcal{A}$ in the following natural way:
\begin{equation*}
    \tform_w(a,b):=w(b^*a),\quad a,b\in\mathcal{A}.
\end{equation*}
If $w$ and $v$ are both representable functionals, then there exists a unique representable functional $w:v$, called the parallel sum of $w$ and $v$, that satisfies $\tform_{w:v}=\tform_w:\tform_v$ (see \cite{tarcsay2015parallel}). In other words, $w:v$ satisfies 
\begin{equation*}
    (w:v)(a^*a)=\inf\{w((b^*b)+v(c^*c)\,:\,b,c\in\mathcal{A},b+c=a\}
\end{equation*}
for every $a\in\alg$. Hence the mapping
\begin{equation*}
    \widetilde{\mu}_v:\mathcal{A}_R^{\dag}\to\mathcal{A}_R^{\dag};\quad w\mapsto w-w:v
\end{equation*}
satisfies $\tform_{\widetilde{\mu}_v(w)}=\mu_{\tform_v}(\tform_w)$. Using iteration steps
\begin{equation*}
    \widetilde{\mu}_v^{[0]}(w)\coloneqq w,\quad\widetilde{\mu}_v^{[n+1]}(w):=\widetilde{\mu}_v(\widetilde{\mu}_v^{[n]}(w)),\quad n\in\mathbb{N}
\end{equation*}
we obtain that the pointwise limit
\begin{equation*}
    w_s:=\lim_{n\to\infty}\widetilde{\mu}_v^{[n]}(w)
\end{equation*}
is a representable functional, moreover,
\begin{equation*}
    \tform_{w_s}=\lim_{n\to\infty}\mu_{\tform_v}^{[n]}(\tform_w).
\end{equation*}

This enables us to invoke Theorem \ref{T:4.1} to obtain the following result generalizing \cite{titkos2019arlinskii}*{Theorem 4.1} (cf. also \cite{tarcsay2016lebesgue}*{Theorem 3.3}):
\begin{theorem}
    Let $v$ and $w$ be representable functionals of $\mathcal{A}$. Then $$w=w_s+(w-w_s)$$ is a Lebesgue-type decomposition of $w$, moreover
    \begin{equation*}
        w-w_s=\max\{u\in\mathcal{A}^{\dag}_R\,:\,u\le w,u\ll v\}.
    \end{equation*}
\end{theorem}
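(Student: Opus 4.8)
The plan is to deduce everything from the form-level Theorem \ref{T:4.1} by transporting it along the correspondence $u\mapsto\tform_u$, where $\tform_u(a,b)=u(b^*a)$. The first task is to record the dictionary that makes this transport faithful. Since $\tform_u[a]=u(a^*a)$ for every $a\in\mathcal{A}$, the order relation $u\le w$ between representable functionals is \emph{equivalent} to $\tform_u\le\tform_w$ as forms, because both assert $u(a^*a)\le w(a^*a)$ for all $a$. Comparing the definition of absolute continuity for functionals (phrased through the sequences $v(a_n^*a_n)$ and $u((a_n-a_m)^*(a_n-a_m))$) with the form-closability condition of the previous subsection (phrased through $\wform[x_n]$ and $\tform[x_n-x_m]$), one sees these coincide under $X=\mathcal{A}$, so $u\ll v$ if and only if $\tform_u\ll\tform_v$. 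Finally, for a representable functional $u$ one has $\tform_u=0$ if and only if $u=0$: indeed $\tform_u=0$ forces $\lVert\pi_u(a)\zeta_u\rVert^2=u(a^*a)=0$, hence $\pi_u(a)\zeta_u=0$ and $u(a)=\sip{\pi_u(a)\zeta_u}{\zeta_u}=0$. These three observations make the dictionary respect $\le$, $\ll$, and $\perp$.

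Next I would check that $w-w_s$ is a representable admissible candidate and recover the decomposition itself. Each iterate satisfies $\widetilde{\mu}_v^{[n]}(w)\le w$, so the pointwise limit obeys $0\le w_s\le w$; as $w$ is representable and $w_s$ is positive, the facts recalled at the start of this subsection give $w-w_s\in\mathcal{A}_R^{\dag}$. Linearity of $u\mapsto\tform_u$ yields $\tform_{w-w_s}=\tform_w-\tform_{w_s}$, and by construction $\tform_{w_s}=\lim_{n\to\infty}\mu_{\tform_v}^{[n]}(\tform_w)$. Applying Theorem \ref{T:4.1} with $\tform=\tform_w$ and $\wform=\tform_v$ shows that $\tform_{w_s}$ is $\tform_v$-singular while $\tform_{w-w_s}=\tform_w-\tform_{w_s}$ is $\tform_v$-absolutely continuous and maximal among such forms. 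Reading the first two back through the dictionary gives $w_s\perp v$ and $w-w_s\ll v$, so $w=w_s+(w-w_s)$ is a genuine Lebesgue-type decomposition; in particular $w-w_s$ lies in the set on the right-hand side of the claim.

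To establish maximality, I would take an arbitrary $u\in\mathcal{A}_R^{\dag}$ with $u\le w$ and $u\ll v$. The dictionary converts these into $\tform_u\le\tform_w$ and $\tform_u\ll\tform_v$, whence the form-level maximality from Theorem \ref{T:4.1} gives $\tform_u\le\tform_{w-w_s}$. Since this says $u(a^*a)\le(w-w_s)(a^*a)$ for every $a\in\mathcal{A}$, it means precisely $u\le w-w_s$ as functionals. As $w-w_s$ itself belongs to the set and dominates every member, it is the required maximum.

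The one genuinely delicate point is why the functional maximum agrees with the form maximum, since the supremum in Theorem \ref{T:4.1} ranges over \emph{all} nonnegative forms in $\mathcal{F}^+(\mathcal{A})$, most of which need not be of the shape $\tform_u$ for a representable $u$. The resolution is that the maximizing form is itself induced by the representable functional $w-w_s$: it already lies in the smaller comparison class and dominates the larger one, so shrinking the admissible set to functional-induced forms neither lowers the supremum nor loses attainment. This is exactly what the third paragraph exploits, and it is what guarantees the transport is lossless.
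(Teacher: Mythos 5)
Your proposal is correct and follows essentially the same route as the paper, which simply transports Theorem \ref{T:4.1} along the correspondence $u\mapsto\tform_u$ after noting that this dictionary respects order, absolute continuity, singularity, and the iteration. Your explicit verification that the form-level maximizer is itself induced by the representable functional $w-w_s$ (so that restricting the comparison class to functional-induced forms loses nothing) is exactly the point the paper leaves implicit.
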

\begin{bibdiv}
\begin{biblist}

\bib{anderson1975shorted}{article}{
      author={Anderson, W.~N., Jr.},
      author={Trapp, G.~E.},
       title={Shorted operators {II.}},
        date={1975},
     journal={SIAM Journal on Applied Mathematics},
      volume={28},
      number={1},
       pages={60\ndash 71},
}

\bib{anderson1969series}{article}{
      author={Anderson~Jr, William~N},
      author={Duffin, Richard~James},
       title={Series and parallel addition of matrices},
        date={1969},
     journal={Journal of Mathematical Analysis and Applications},
      volume={26},
      number={3},
       pages={576\ndash 594},
}

\bib{ando1976lebesgue}{article}{
      author={Ando, T.},
       title={Lebesgue-type decomposition of positive operators},
        date={1976},
     journal={Acta Sci. Math.(Szeged)},
      volume={38},
      number={3-4},
       pages={253\ndash 260},
}

\bib{arlinskii2017}{article}{
      author={Arlinskii, Y.~M.},
       title={On the mappings connected with parallel addition of nonnegative operators},
        date={2017},
     journal={Positivity},
      volume={21},
       pages={299\ndash 327},
}

\bib{eriksson1986potential}{article}{
      author={Eriksson, Sirkka-Liisa},
      author={Leutwiler, Heinz},
       title={A potential-theoretic approach to parallel addition},
        date={1986},
     journal={Mathematische Annalen},
      volume={274},
       pages={301\ndash 317},
}

\bib{gudder1979radon}{article}{
      author={Gudder, S.~P.},
       title={A {R}adon-{N}ikodym theorem for *-algebras},
        date={1979},
     journal={Pacific Journal of Mathematics},
      volume={80},
      number={1},
       pages={141\ndash 149},
}

\bib{hassi2009lebesgue}{article}{
      author={Hassi, S.},
      author={Sebesty{\'e}n, Z.},
      author={de~Snoo, H.S.V.},
       title={Lebesgue type decompositions for nonnegative forms},
        date={2009},
     journal={Journal of Functional Analysis},
      volume={257},
      number={12},
       pages={3858\ndash 3894},
}

\bib{palmer}{book}{
      author={Palmer, T.~W.},
       title={Banach algebras and the general theory of *-algebras: Volume 2,},
   publisher={Cambridge university press},
        date={2001},
      volume={2},
}

\bib{pekarev1976parallel}{article}{
      author={Pekarev, {E}.~L.},
      author={{\v{S}}mul'jan, Yu.~L.},
       title={Parallel addition and parallel subtraction of operators},
        date={1976},
     journal={Mathematics of the USSR-Izvestiya},
      volume={10},
      number={2},
       pages={351},
}

\bib{SzucsTakacs}{article}{
      author={Sz{\H u}cs, Zs.},
      author={Tak{\'a}cs, B.},
       title={Finite dimensional irreducible representations and the uniqueness of the lebesgue decomposition of positive functional},
        date={2024},
     journal={Journal of Operator Theory},
      volume={91},
      number={1},
       pages={55\ndash 95},
}

\bib{tarcsay2015parallel}{article}{
      author={Tarcsay, Zs.},
       title={On the parallel sum of positive operators, forms, and functionals},
        date={2015},
     journal={Acta Mathematica Hungarica},
      volume={147},
       pages={408\ndash 426},
}

\bib{tarcsay2016lebesgue}{article}{
      author={Tarcsay, Zs.},
       title={Lebesgue decomposition for representable functionals on *-algebras},
        date={2016},
     journal={Glasgow Mathematical Journal},
      volume={58},
      number={2},
       pages={491\ndash 501},
}

\bib{TARCSAY2020Lebesgue}{article}{
      author={Tarcsay, Zs.},
       title={{Operators on anti-dual pairs: Lebesgue decomposition of positive operators}},
        date={2020},
        ISSN={0022-247X},
     journal={Journal of Mathematical Analysis and Applications},
      volume={484},
      number={2},
       pages={123753},
         url={http://www.sciencedirect.com/science/article/pii/S0022247X19310212},
}

\bib{TarcsayMathNach}{article}{
      author={Tarcsay, Zs.},
      author={Titkos, T.},
       title={{Operators on anti-dual pairs: Generalized Krein-von Neumann extension}},
        date={2021},
     journal={Mathematische Nachrichten},
      volume={294},
      number={9},
       pages={1821\ndash 1838},
}

\bib{titkos2013content}{article}{
      author={Titkos, T.},
       title={Lebesgue decomposition of contents via nonnegative forms},
        date={2013},
     journal={Acta Mathematica Hungarica},
      volume={140},
       pages={151\ndash 161},
}

\bib{titkos2019arlinskii}{article}{
      author={Titkos, T.},
       title={Arlinskii's iteration and its applications},
        date={2019},
     journal={Proceedings of the Edinburgh Mathematical Society},
      volume={62},
      number={1},
       pages={125\ndash 133},
}

\end{biblist}
\end{bibdiv}

%\bibliography{references_Arlinskii}
\end{document}